\newtheorem{thm}{Theorem}[section]
\newtheorem{prop}[thm]{Proposition}
\theoremstyle{definition}
\newtheorem{dfn}[thm]{Definition}
\newtheorem{Hypo}[thm]{Hypothesis}
\newtheorem{conj}[thm]{Conjecture}
\newtheorem{remark}[thm]{Remark}
\theoremstyle{plain}
\numberwithin{equation}{section}
\newcommand{\N}{\mathbb{N}}
\newcommand{\mbP}{\mathbb{P}}
\newcommand{\R}{\mathbb{R}}
\newcommand{\Z}{\mathbb{Z}}
\newcommand{\SL}{\mathrm{SL}}
\newcommand{\new}{\mathrm{new}}
\newcommand{\lra}{\longrightarrow}
\newcommand{\ra}{\rightarrow}
\newcommand{\mrm}[1]{\mathrm{#1}}
\title[Multiplicity one theorems]{A Variant of Multiplicity one theorems for half-integral weight modular forms}
\author[N. Kumar]{Narasimha Kumar}
\email{narasimha.kumar@iith.ac.in}
\address{
Department of Mathematics \\
Indian Institute of Technology Hyderabad\\
Kandi, Sangareddy - 502285\\
INDIA. 
}
\date{}
\begin{document}
\begin{abstract}
We show that signs of Fourier coefficients, on certain sub-families, determine the half-integral weight cuspidal eigenform uniquely, 
up to a positive constant. 
We also study sign change results for the product of the Fourier coefficients of two distinct
half-integral weight eigenforms.
\end{abstract}
\subjclass[2010]{Primary 11F03,11F11; Secondary 11F30}
\keywords{half-integral weight eigenforms,  Shimura correspondence, multiplicity one theorem, sign changes}	
\maketitle
\section{introduction}
Determination of modular forms is one of the fundamental and an interesting problem in number theory. 
One can determine cuspidal Hecke eigenforms of integral weight by the central critical values of the corresponding $L$-functions twisted 
by certain Dirichlet characters or by a family of modular forms (cf.~\cite{LR97},~\cite{GHS09}). On the other hand, the eigenvalues of the Hecke operators at primes 
$p$ acting on the space of newforms  will also determine the newform uniquely.  In literature, these are known as multiplicity one theorems.

For primitive forms $f$ of integral weight, the extent to which the signs of Hecke eigenvalues at primes $p$ determine $f$ uniquely has been 
first studied by Kowalski et al.~\cite{KLSW10}
(and also by Matom\"aki~\cite{Mat12}, who refined some of their results). A natural question to ask 
if similar results continue to hold in the case of half-integral weight modular forms? 
Classically, there are several multiplicity one theorems available in the literature for half-integral weight cuspidal eigenforms in terms of their Hecke eigenvalues (cf.~\cite{MRV90},~\cite{Koh82}). 

In this article, we show that signs of Fourier coefficients on certain sub-families, which are accessible via the Shimura correspondence, determine the half-integral weight cuspidal eigenform uniquely, 
up to a positive scalar multiple (cf. \S 3). We also study the sign change results for the product of  Fourier coefficients of two distinct half-integral weight cuspidal eigenforms, 
by assuming pair Sato-Tate conjecture for their corresponding Shimura lifts. We also state a equi-distribution version of this result (cf. \S 4). The basic idea of this article comes 
from the author's previous article~\cite{Kum15}, which in turn a variant of the techniques of ~\cite{AIW15}.  


\section{Preliminaries} 
\label{preliminaries}
Let $\mathbb{P}$ denotes the set of all prime numbers.
Now, we let us recall the Sato-Tate measure and the notion of natural density and analytic density for subsets of $\mathbb{P}$.

\begin{dfn}
	The Sato-Tate measure $\mu_{\mrm{ST}}$ is the probability measure on  $[-1,1]$ given by $\frac{2}{\pi} \sqrt{1-t^2} dt$. 
\end{dfn}
\begin{dfn}
	Let $S$ be a subset of  $\mathbb{P}$. The set $S$ has  natural density $d(S)$ (resp., analytic density $d_{\rm{an}}(S)$),
	if the limit
	\begin{equation}
	\underset{x \ra \infty}{\mrm{lim}}\  \frac{\# \{ p \leq x: p\in S\}}{\pi(x)} \quad \bigg(\mathrm{resp.,} \ \ \underset{s \ra 1^{+}}{\mrm{lim}}\  \frac{\sum_{p \in S} \frac{1}{p^s}}{\log(\frac{1}{s-1})} \bigg)
	\end{equation}
	exists and is equal to $d(S)$ (resp., is equal to  $d_{\rm{an}}(S)$), where $\pi(x):= \# \{ p \leq x: p\in \mathbb{P}\}$. 
\end{dfn}

\begin{remark}
	If a subset $S \subseteq \mathbb{P}$ has a natural density, then it also has an analytic density, and the two densities are the same. 
	Observe, if $|S|<\infty$, then $d(S)=0$  and hence  $d_{\mathrm{an}}(S)=0$.
\end{remark}

Let $k,N$ be natural numbers and $\chi$ be a Dirichlet character modulo $4N$. Then $S_{k+\frac{1}{2}}(4N,\chi)$ be  
the space of cusp forms of weight $k+\frac{1}{2}$, level $4N$ with character $\chi$. We let $\chi_0$ to denote the trivial character.
When $k=1$, we shall work only with the orthogonal complement (with respect to the Petersson inner product) of the subspace of $S_{k+\frac{1}{2}}(4N,\chi)$ spanned by single-variable unary theta functions.

Let $N \geq 1 $ be an odd and square-free integer. Let $S^\new_{k+\frac{1}{2}}(4N,\chi)$ denote the space of newforms inside $S_{k+\frac{1}{2}}(4N,\chi)$. Let $S_{k+\frac{1}{2}}^{+}(4N,\chi)$ denote Kohnen's $+$-subspace of $S_{k+\frac{1}{2}}(4N,\chi)$	
consisting of modular forms $f = \sum_{n=1}^{\infty} a_f(n) q^n $ with
$a_f(n)=0$  for $n \equiv 2,(-1)^{k+1} \pmod 4$. Let $S^{+,\new}_{k+\frac{1}{2}}(4N,\chi)$ denote the space of newforms in $S^+_{k+\frac{1}{2}}(4N,\chi)$.

\section{Multiplicity one theorem}
\label{mainresult}
In this section, we shall state one of the main result of this article and shall give a proof of it. 
Throughout this article, we shall stick to the following notation.

\begin{Hypo}
\label{key1}
Let $f=\sum_{n=1}^{\infty} a(n) q^n \in S^\new_{k_1+\frac{1}{2}}(4N_1,\chi_0)$ ($g=\sum_{n=1}^{\infty} b(n) q^n \in S^\new_{k_2+\frac{1}{2}}(4N_2,\chi_0)$)
be a non-zero cuspidal eigenform for operators $T_{p^2}$ for primes $p \nmid 2N_1$, (resp., $p \nmid 2N_2$), where  $k_1,k_2 \geq 1$ are integers
and $N_1,N_2$ are odd and square-free integers. Suppose there exists a square-free integer $t \geq 1$ such that $a(t)b(t) \neq 0$.
Suppose $F_t =\sum_{n=1}^{\infty} A_t(n) q^n \in S^\new_{2k_1}(2N_1)$ ($G_t=\sum_{n=1}^{\infty} B_t(n) q^n \in S^\new_{2k_2}(2N_2))$ 
are the cuspidal eigenforms for operators $T_p$  for primes  $p\nmid 2N_1$ (resp., $p\nmid 2N_2$) corresponding to $f$ (resp., $g$) 
under the Shimura lift for the square-free integer $t$. 
\end{Hypo}

WLOG, we can assume that $a(t)=1$, $b(t)=1$. This is because, once we have the theorem in this case, we can apply the theorem with
$\frac{f}{a(t)}$ and $\frac{g}{b(t)}$ to prove the general case. An advantage of this reduction is that, in this case, the eigenforms $F_t$, $G_t$ are 
become primitive forms and they are independent of $t$. For simplicity, we denote them by $F,G$, and their coefficients with $A(n),B(n)(n \in \N)$ respectively.
Observe that, the levels $2N_1,2N_2$ are square-free integers, hence the primitive forms $F,G$ are without complex multiplication.

In this article, we shall follow this notation: 
Let $f,g,F,G$ be as in Hypothesis~\ref{key1}. For any prime $p$,
Let $C(p) \in [-1,1]$, $D(p)\in [-1,1]$ denote $\frac{A(p)}{2p^{k_1-\frac{1}{2}}}$,  $\frac{B(p)}{2p^{k_2-\frac{1}{2}}}$ for $F$,$G$, resp.,
Now, we are ready to state one of the main result of this article.
\begin{thm}[Multiplicity one theorem]
\label{multiplicationthm1}
Let $f,g$ be two half-integral weight eigenforms satisfying Hypothesis~\ref{key1}.
If $a(tp^2)$ and $b(tp^2)$ have the same sign for every $p \not \in E_0$ with  $d_{\mrm{an}}(E_0) \leq 6/25$
then $N_1=N_2$, $k_1=k_2$, and $f= g$, up to a positive scalar multiple.
\end{thm}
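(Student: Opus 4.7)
The plan is two-fold. First, transfer the sign hypothesis on $a(tp^2), b(tp^2)$ into a sign hypothesis on $A(p), B(p)$ via the Shimura correspondence. Second, invoke a density-style multiplicity one theorem for primitive integral weight non-CM eigenforms, following the techniques of \cite{Kum15, AIW15}.

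\emph{Shimura transfer.} After the WLOG reduction $a(t) = b(t) = 1$, the standard Shimura formula at a prime $p \nmid 2tN_i$ reads
$$a(tp^2) = A(p) - \psi_1(p)\, p^{k_1-1}, \qquad b(tp^2) = B(p) - \psi_2(p)\, p^{k_2-1},$$
for quadratic characters $\psi_i = \left( \frac{(-1)^{k_i} t}{\cdot} \right)$. Dividing by $2p^{k_i-1/2}$ gives
$$\frac{a(tp^2)}{2 p^{k_1 - 1/2}} = C(p) - \frac{\psi_1(p)}{2\sqrt{p}}, \qquad \frac{b(tp^2)}{2 p^{k_2 - 1/2}} = D(p) - \frac{\psi_2(p)}{2\sqrt{p}}.$$
Thus $\mathrm{sign}(a(tp^2)) = \mathrm{sign}(C(p))$ whenever $|C(p)| > 1/(2\sqrt{p})$, and similarly for $b$ and $D$. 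The set of primes where $|C(p)| \leq 1/(2\sqrt{p})$ has analytic density zero: the Sato-Tate measure of the interval $(-1/(2\sqrt{p}), 1/(2\sqrt{p}))$ is $O(1/\sqrt{p}) \to 0$, and a standard splitting of $p \leq y$ versus $y < p \leq x$ combined with Sato-Tate for the non-CM primitive form $F$ (applicable since the square-free level hypothesis excludes CM) gives a count $o(\pi(x))$; likewise for $D$. Enlarging $E_0$ by this density-zero set yields $E_0'$ with $d_{\mathrm{an}}(E_0') \leq 6/25$, outside of which $\mathrm{sign}(C(p)) = \mathrm{sign}(D(p))$.

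\emph{Density step and conclusion.} If $F \neq G$, Rankin-Selberg estimates give $\sum_p C(p)^2/p^s \sim \sum_p D(p)^2/p^s \sim \tfrac{1}{4} \log \tfrac{1}{s-1}$ and $\sum_p C(p) D(p)/p^s = O(1)$ as $s \to 1^+$. Together with the pointwise bound $|C(p) D(p)| \leq 1$, a moment/Cauchy-Schwarz-type inequality (carried out as in \cite{Kum15}) forces the analytic density of $\{p : C(p) D(p) < 0\}$ to exceed $6/25$ strictly, contradicting the translated hypothesis. Hence $F = G$, which forces $2k_1 = 2k_2$ and $2N_1 = 2N_2$, i.e., $k_1 = k_2$ and $N_1 = N_2$. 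By the classical multiplicity one theorem for the half-integral weight newspace \cite{MRV90, Koh82}, the forms $f, g \in S^{\mathrm{new}}_{k_1 + 1/2}(4N_1, \chi_0)$ with a common Shimura lift at $t$ must be proportional: $g = c f$ for some $c \neq 0$. Since $b(tp^2) = c\, a(tp^2)$ and both share signs on a positive density set of primes, $c > 0$.

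\emph{Main obstacle.} The crux is the quantitative density step producing the threshold $6/25$. It rests on optimally combining the second-moment Rankin-Selberg asymptotics for $C(p)^2$ and $D(p)^2$ with the cancellation $\sum_p C(p) D(p)/p^s = O(1)$ and the pointwise bound $|C(p) D(p)| \leq 1$; any weakening of the moment inputs would inflate the admissible exceptional density. The Shimura transfer, by contrast, is essentially bookkeeping, provided one handles the character $\psi_i$ and the negligible set where $|C(p)|$ or $|D(p)|$ drops below $1/(2\sqrt{p})$.
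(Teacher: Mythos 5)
Your overall architecture coincides with the paper's: reduce to $a(t)=b(t)=1$, use the Shimura relation $a(tp^2)=A(p)-\chi_1(p)p^{k_1-1}$ to show that $a(tp^2)$ and $C(p)$ share signs outside a density-zero set (your Sato--Tate treatment of the primes with $|C(p)|\le \tfrac{1}{2\sqrt p}$ is exactly Proposition~\ref{keyproposition1}), transfer the hypothesis to a sign-agreement statement for $C(p)$ and $D(p)$ outside a set of analytic density at most $6/25$, conclude $F=G$, and finish with the Hecke-eigenvalue multiplicity one theorem of \cite{MRV90} together with the positivity of the scalar. All of that is fine.

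The gap is in your ``density step,'' which is precisely the step the paper does \emph{not} prove: it quotes \cite[Theorem 2]{Mat12} verbatim, and that theorem is where the constant $6/25$ lives. Your sketch --- the second moments $\sum_p C(p)^2/p^s\sim\tfrac14\log\tfrac1{s-1}$, the Rankin--Selberg cancellation $\sum_p C(p)D(p)/p^s=O(1)$ for $F\neq G$, the pointwise bound $|C(p)D(p)|\le 1$, and ``a moment/Cauchy--Schwarz-type inequality'' --- does not yield $6/25$. Carried out in the standard way, one bounds $\sum_p |C(p)D(p)|/p^s \ge \sum_p C(p)^2D(p)^2/p^s \sim \tfrac1{16}\log\tfrac1{s-1}$ (using $|CD|^2\le |CD|$ and the fourth-moment asymptotic for non-twist pairs), while $\sum_p |C(p)D(p)|/p^s \le O(1) + 2\delta\log\tfrac1{s-1}$ where $\delta$ is the density of $\{p: C(p)D(p)<0\}$; this gives only $\delta\ge 1/32$, the Kowalski--Lau--Soundararajan--Wu-type threshold, which is far short of $6/25=0.24$. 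Reaching $6/25$ requires Matom\"aki's substantially more delicate optimization (exploiting the Sato--Tate distribution to control the primes where $|C(p)|$ or $|D(p)|$ is small), which you neither carry out nor cite; the appeal to \cite{Kum15} does not supply it either. Either reproduce Matom\"aki's argument or, as the paper does, invoke \cite[Theorem 2]{Mat12} directly; with that substitution the remainder of your proof is correct.
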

Before we proceed to prove the theorem, we recall some basic properties of Shimura lift and also will prove a proposition
which will be useful.

Let $f$ be a cuspidal eigenform as in Hypothesis~\ref{key1}  and $F$ denote the Shimura lift corresponding to $f$. 
By ~\cite[\S 5]{CR94}, we can see the relation between the Fourier coefficients of $f$ and those of its lift $F$, namely 
\begin{equation}
\label{keyequation}
A(n) = \sum_{d|n, (d,2N_1)=1} \chi_{1}(d)d^{k_1-1} a\left( \frac{tn^2}{d^2}\right), 
\end{equation}
where $\chi_1(d)$ is a quadratic character, whose explicit expression is not necessary in our  context. 
In particular, for a prime $p$ with $(p,2N_1)=1$, the relation becomes
\begin{equation}
\label{keyequation1}
 a(tp^2)  =  A(p)- \chi_1(p) p^{k_1-1}.
\end{equation}
\begin{remark}
Since $F$ is a primitive form with trivial nebentypus, one knows that the  Fourier coefficients $A(n)(n \in \N)$ of $F$ are real numbers. 
In particular, by~\eqref{keyequation1}, we can see that $\{ a(tp^2) \}_{p \in \mathbb{P}}$ are also real numbers, hence we can talk about signs and sign changes.
\end{remark}  
By~\eqref{keyequation1}, for any $p \in \mathbb{P}$, we have that
\begin{equation*}
a(tp^2)<0 \Longleftrightarrow  C(p) < \frac{\chi_1(p)}{2\sqrt{p}}.
\end{equation*}

We see that, if $a(tp^2)$ is negative, then it does not mean that $C(p)$  is negative. So, Theorem~\ref{multiplicationthm1} is not an immediate
consequence of a theorem of Matom\"aki (cf. \cite[Theorem 2]{Mat12}). However, we can still deduce our theorem from there by a trick,
which is the content of the following proposition.

\begin{prop}
\label{keyproposition1}
Let $f=\sum_{n=1}^{\infty} a(n) q^n \in S^\new_{k_1+\frac{1}{2}}(4N_1,\chi_0)$ be as in Hypothesis~\ref{key1}.
The natural density of primes $p$ for which $a(tp^2)$ and $C(p)$ have the opposite sign is zero.
\end{prop}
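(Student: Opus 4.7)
The plan is to use the explicit relation~\eqref{keyequation1} to reduce the sign-mismatch set to a set that is controlled by the Sato--Tate distribution of the Shimura lift $F$.

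First I would unpack when the signs of $a(tp^2)$ and $C(p)$ actually differ. Since $C(p)$ has the same sign as $A(p)$ and $a(tp^2) = A(p) - \chi_1(p)\, p^{k_1-1}$, a short case analysis on $\chi_1(p) \in \{\pm 1\}$ and on the sign of $A(p)$ shows that the two quantities have opposite signs only when $A(p)$ lies strictly between $0$ and $\chi_1(p)\, p^{k_1-1}$. Dividing by $2p^{k_1 - 1/2}$, this gives the containment
\begin{equation*}
\bigl\{\, p \in \mathbb{P} : \mathrm{sgn}(a(tp^2)) \neq \mathrm{sgn}(C(p))\,\bigr\}\ \subseteq\ \Bigl\{\, p \in \mathbb{P} :\ 0 < |C(p)| \leq \tfrac{1}{2\sqrt{p}}\,\Bigr\},
\end{equation*}
up to the (at most finitely many) primes dividing $2N_1$, where \eqref{keyequation1} is not valid.

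Next I would invoke Sato--Tate for $F$. Under Hypothesis~\ref{key1} the form $F$ is a non-CM primitive form of square-free level $2N_1$ and trivial nebentypus, so the results of Barnet-Lamb--Geraghty--Harris--Taylor apply: the sequence $\{C(p)\}_{p \in \mathbb{P}}$ is equidistributed with respect to the Sato--Tate measure $\mu_{\mrm{ST}}$ on $[-1,1]$. For any fixed $\varepsilon > 0$, only finitely many primes $p$ fail the inequality $\frac{1}{2\sqrt{p}} < \varepsilon$, hence
\begin{equation*}
\bigl\{\, p : |C(p)| \leq \tfrac{1}{2\sqrt{p}} \,\bigr\}\ \subseteq\ \bigl\{\, p : |C(p)| \leq \varepsilon \,\bigr\}
\end{equation*}
up to finitely many primes. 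By Sato--Tate the right-hand set has natural density $\mu_{\mrm{ST}}([-\varepsilon, \varepsilon]) = \frac{2}{\pi}\int_{-\varepsilon}^{\varepsilon} \sqrt{1-t^2}\, dt$, which tends to $0$ as $\varepsilon \to 0^+$.

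Combining these two steps, the upper natural density of the sign-mismatch set is bounded by $\mu_{\mrm{ST}}([-\varepsilon, \varepsilon])$ for every $\varepsilon > 0$, hence is zero. The only genuine obstacle is the invocation of Sato--Tate, but in this setting (square-free level, trivial character, non-CM) it is available as a theorem, so the argument is clean. Everything else is just the elementary case analysis on signs in \eqref{keyequation1}.
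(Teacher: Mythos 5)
Your proposal is correct and follows essentially the same route as the paper: reduce the sign-mismatch set via \eqref{keyequation1} to the set where $|C(p)| \leq \tfrac{1}{2\sqrt{p}}$, absorb the finitely many primes with $p \leq \tfrac{1}{4\varepsilon^2}$, apply the Sato--Tate equidistribution theorem of Barnet-Lamb--Geraghty--Harris--Taylor to bound the upper density by $\mu_{\mrm{ST}}([-\varepsilon,\varepsilon])$, and let $\varepsilon \to 0^+$. The only cosmetic difference is that the paper treats the two one-sided cases $0 \leq C(p) < \tfrac{1}{2\sqrt{p}}$ and its negative counterpart separately, while you handle both at once with the absolute value.
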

\begin{proof}
	To prove the proposition, it is sufficient to show that
	$$d\left(\left\lbrace p\ \mrm{prime}: p \nmid 2N_1,  \frac{1}{2 \sqrt{p}} > C(p) \geq 0 \right\rbrace\right)=0.$$
	
	For any fixed (but small) $\epsilon >0$, we have the following inclusion of sets 
	$$ \left\lbrace p\leq x: p \nmid 2N_1,\  C(p)\in[0,\epsilon] \right\rbrace \supseteq 
	\left\lbrace  p \leq x: p \nmid 2N_1,\ p> \frac{1}{4\epsilon^2},\ 0 \leq C(p) <\frac{1}{2\sqrt{p}} \right\rbrace.$$ 
	Hence, we have
	$$ \# \{p\leq x: p \nmid 2N_1,\  C(p)\in[0,\epsilon]\} + \pi\left(\frac{1}{4\epsilon^2}\right) \geq \# \{ p \leq x:  
	p \nmid 4N_1,\  0 \leq C(p) < \frac{1}{2\sqrt{p}}\}.$$
	Now divide the above inequality by $\pi(x)$
	$$ \frac{\# \{p\leq x:  p \nmid 2N_1,\  C(p)\in[0,\epsilon]\}}{\pi(x)} + \frac{\pi\left(\frac{1}{4\epsilon^2}\right) }{\pi(x)} 
	\geq \frac{\# \left\lbrace p \leq x: p \nmid 2N_1,\ 0 \leq C(p)  < \frac{1}{2\sqrt{p}} \right\rbrace}{\pi(x)}.$$
	The term $\frac{\pi(\frac{1}{4\epsilon^2})}{\pi(x)}$ tends to zero as $x\ra \infty$ as $\pi(\frac{1}{4\epsilon^2})$ is finite.
	By the Sato-Tate equi-distribution theorem (\cite[Thm. B.]{BGHT11}), we have
	$$ \frac{\# \{ p\leq x : C(p)\in[0,\epsilon] \} }{\pi(x)}  \lra \mu_{\mrm{ST}}([0,\epsilon]) \quad\ \mrm{as}\quad 	x \ra \infty. $$ 
	This implies that 
	\begin{equation}
	\label{key-inequality-1}
	\underset{x \ra \infty}{\mrm{lim\ sup}} \ \frac{\{p\leq x: p \nmid 2N_1, 0 \leq C(p)  < \frac{1}{2 \sqrt{p}} \}}{\pi(x)} \leq \mu_{\mrm{ST}}([0,\epsilon ]).
	\end{equation}
	Since the inequality~\eqref{key-inequality-1} holds for all $\epsilon>0$, we have that 
	$$ \underset{x \ra \infty}{\mrm{lim}} \ \frac{\{p\leq x : p \nmid 2N_1, 0 \leq C(p)  < \frac{1}{2 \sqrt{p}} \}}{\pi(x)} = 0.$$
   The proof in the other case, i.e., $a(tp^2)$ is positive, is similar to the above one.
\end{proof}

Now, we are ready to prove Theorem~\ref{multiplicationthm1}.
\begin{proof}
By Proposition~\ref{keyproposition1},  the signs of $a(tp^2)$ and $C(p)$ are exactly the same, except possibly for a natural density zero set of primes, say $E_f$. 
Similarly, for the eigenform $g$ and denote the set by $E_g$.  Take $E=E_0 \cup E_f \cup E_g \subseteq \mathbb{P}$.
 
Since $a(tp^2)$ and $b(tp^2)$ have same sign for every $p \not \in E$ with  $d_{\mrm{an}}(E) \leq 6/25$, then $C(p)$ and $D(p)$ also have same sign for every $p \not \in E$. 
This implies that, $C(p)$ and $D(p)$ have same sign for every prime $p \not \in E$ with  analytic density $\leq 6/25$. 
This implies that $k_1=k_2$, $N_1=N_2$ and $F=G$,  by a theorem of Matom\"aki (cf.~\cite[Theorem 2]{Mat12}). 

Since Shimura lift commutes with the Hecke operators, we see that,
for $p \nmid 2N_1N_2$, the $T_{p^2}$-eigenvalues of $f$,$g$ are the same, since they coincide with the $T_p$-eigenvalue of $F(=G)$.
By ~\cite[Theorem 5]{MRV90}, we see that the half-integral weight cuspidal eigenforms $f$ is a scalar multiple of $g$. 

In the general case, we can apply the proof with $f/a(t)$ and $g/b(t)$ to prove that $f$ is a scalar multiple of $g$. 
Since the $tp^2$-th coefficients of $f.g$ have the same sign, this shows that the constant has to be positive.
\end{proof}
We remark that, the above theorem is also true for eigenforms in the Kohnen's +-space $f=\sum_{n=1}^{\infty} a(n) q^n \in S^{+,\new}_{k_1+\frac{1}{2}}(4N_1,\chi_0)$,
$g=\sum_{n=1}^{\infty} b(n) q^n \in S^{+,\new}_{k_2+\frac{1}{2}}(4N_2,\chi_0)$ with $k_1 \equiv k_2 \pmod 2$.
In this case, the Shimura lifts, corresponding to a fundamental discriminant $D$, of $f,g$ 
belong to $S^\new_{2k_1}(N_1)$, $S^\new_{2k_2}(N_2)$, resp., (cf.~\cite[\S5]{CR94},~\cite{Koh82}).   Now,  the rest of the proof is similar to the proof of Theorem~\ref{multiplicationthm1}.

\section{Equi-distribution result for the product of Fourier coefficients}
\label{variant}
Recently, a variant of sign change result for the product of  Fourier coefficients of half-integral weight weight modular eigenforms has been studied in~\cite{GKR15}. 
They show that there exists an infinite set $S \subset \mathbb{P}$, such that
for any prime $p\in S$, the sequence $\{a(tp^{2m}) b(tp^{2m})\}(m \in \N)$ change signs infinitely often.

In this section, we shall  study the sign change results for the product of Fourier coefficients $\{a(tp^{2}) b(tp^{2})\}(p \in \mathbb{P})$.
In fact, we prove an equi-distribution result for the product of Fourier coefficients by assuming the pair Sato-Tate conjecture for non-CM Hecke eigenforms 
of integral weight (Conjecture~\ref{Sato-Tate-conj} below). Now, let us recall the pair Sato-Tate equi-distribution conjecture.
\subsection{Pair Sato-Tate equi-distribution conjecture:}
For $i=1,2$, let $g_i = \sum_{n=1}^{\infty} b_i(n)q^n$ be primitive eigenforms of weight $2k_i$ and level $2N_i$, resp.,  
For $i=1,2$, by Deligne's bound, for any prime $p$, we have that 
$$ |b_i(p)| \leq 2 p^{k_i-\frac{1}{2}}, $$
and we let 
\begin{equation}
\label{Sato-Tate-normalization}
B_i(p):=\frac{b_i(p)}{2p^{k_i-\frac{1}{2}}} \in [-1,1].
\end{equation}
We have the following pair Sato-Tate equi-distribution conjecture for the pair $(g_1,g_2)$. 
\begin{conj}
\label{Sato-Tate-conj}
Let $g_1,g_2$ be distinct non-CM primitive forms of weight $2k_1,2k_2$ and level $2N_1,2N_2$, resp.,
Assume that they are not twists of each other.  For any two subintervals $I_1\subseteq [-1,1], I_2 \subseteq [-1,1]$, we have 
$$ d(S(I_1,I_2)) = \underset{x \ra \infty}{\mathrm{lim}}  \ \frac{\#S(I_1,I_2)(x)}{\pi(x)} = \mu_{\mrm{ST}}(I_1) \mu_{\mrm{ST}}(I_2)= 
\frac{4}{\pi^2} \int_{I_1}  \sqrt{1-s^2}  ds  \int_{I_2}  \sqrt{1-t^2}  dt,$$
where 
\begin{align*}
S(I_1,I_2)     &=  \left\lbrace p \in \mathbb{P}: p\nmid 2N_1N_2, B_1(p) \in I_1, B_2(p) \in I_2 \right\rbrace \\
S(I_1,I_2)(x) &=  \left\lbrace p\leq x: p\in S(I_1,I_2) \right\rbrace.
\end{align*}
In other words, the Fourier coefficients at primes are independently distributed with respect to the Sato-Tate distribution.
\end{conj}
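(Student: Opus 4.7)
The plan is to adapt the standard $L$-function / Tauberian strategy used for the single-form Sato-Tate theorem (\cite{BGHT11}) to the product setting. By Peter-Weyl applied to $\mrm{SU}(2) \times \mrm{SU}(2)$, the characters of the irreducible representations are precisely the products $U_m(s) U_n(t)$ of Chebyshev polynomials of the second kind, and these form an orthonormal basis of $L^2([-1,1]^2, \mu_{\mrm{ST}} \otimes \mu_{\mrm{ST}})$. Hence, via the Weyl equidistribution criterion, Conjecture~\ref{Sato-Tate-conj} is equivalent to showing
\begin{equation*}
\sum_{\substack{p \leq x \\ p \nmid 2N_1 N_2}} U_m(B_1(p)) \, U_n(B_2(p)) = o(\pi(x))
\end{equation*}
for every pair $(m,n) \neq (0,0)$ with $m, n \geq 0$.

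Recognising $U_m(B_i(p))$ as the trace of the normalised Satake matrix at $p$ of the symmetric power $\mrm{Sym}^m g_i$, this sum is essentially the $p$-coefficient sum of the Rankin-Selberg $L$-function $L(s, \mrm{Sym}^m g_1 \otimes \mrm{Sym}^n g_2)$. A Wiener-Ikehara type Tauberian argument then reduces the conjecture to the assertion that, for every $(m,n) \neq (0,0)$, this $L$-function admits a holomorphic continuation to the closed half-plane $\mrm{Re}(s) \geq 1$ and is non-vanishing on the line $\mrm{Re}(s) = 1$.

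To establish these analytic properties, one invokes two deep inputs. First, the recent theorems of Newton-Thorne imply that $\mrm{Sym}^m g_i$ is a cuspidal automorphic representation of $\GL_{m+1}(\A_\Q)$ for every $m \geq 1$ (using the non-CM hypothesis on $g_i$). Second, the Rankin-Selberg theory of Jacquet, Piatetski-Shapiro and Shalika provides the meromorphic continuation and functional equation of $L(s, \pi_1 \otimes \pi_2)$ for cuspidal $\pi_1, \pi_2$ on general linear groups, while the work of Shahidi and Jacquet-Shalika rules out zeros on $\mrm{Re}(s) = 1$ and localises the pole at $s = 1$ to the case $\pi_2 \cong \tilde{\pi}_1$. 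Applying these to $\pi_1 = \mrm{Sym}^m g_1$ and $\pi_2 = \mrm{Sym}^n g_2$ yields the required analytic input, \emph{provided} $\mrm{Sym}^m g_1 \not\cong \widetilde{\mrm{Sym}^n g_2}$.

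The main obstacle is verifying this last non-isomorphism under the hypotheses that $g_1, g_2$ are non-CM, distinct, and not twists of each other. Matching of dimensions forces $m = n$; for $m = n \geq 1$, the potential coincidence $\mrm{Sym}^m g_1 \cong \mrm{Sym}^m g_2$ (up to twist) can be excluded by passing to the associated $\ell$-adic Galois representations and analysing the image: a standard Ribet-type argument shows that the Zariski closure of the image of $\Gal(\overline{\Q}/\Q)$ in $\GL_2 \times \GL_2$ contains $\SL_2 \times \SL_2$ unless $g_1$ and $g_2$ are twists of each other, contradicting the assumption. Once the inequivalence is secured for all relevant $(m,n)$, the joint equidistribution follows from the Tauberian step applied coefficient-by-coefficient in the basis $\{U_m \otimes U_n\}$.
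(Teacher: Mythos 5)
The first thing to say is that the paper does not prove this statement at all: it is stated as Conjecture~\ref{Sato-Tate-conj} and is then \emph{assumed} as a hypothesis in Theorem~\ref{main-thm-2} and in the final equidistribution theorem. So there is no proof in the paper to compare yours against; what you have written is an outline of how one would upgrade the conjecture to a theorem. Your outline is the standard modern strategy, and it is essentially sound: Weyl's criterion for $\mathrm{SU}(2)\times\mathrm{SU}(2)$ reduces joint equidistribution to the decay of the averages of $U_m(B_1(p))U_n(B_2(p))$ for $(m,n)\neq(0,0)$; these averages are governed by $L(s,\mathrm{Sym}^m g_1\otimes\mathrm{Sym}^n g_2)$; Newton--Thorne supplies cuspidality of the symmetric powers in the non-CM case; and Jacquet--Piatetski-Shapiro--Shalika together with Shahidi's nonvanishing on $\mathrm{Re}(s)=1$ give the analytic input for the Tauberian step, provided no factor is dual to the other. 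If carried out in full, this would let the author delete the word ``conjecture'' and remove the hypothesis from Section~4.

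Two places in your sketch deserve tightening. First, the step you label a ``Ribet-type argument'' is the only point where you gesture at an argument rather than cite one, and it is also the only genuinely delicate point: what you need is that for non-CM newforms with $m=n\geq 1$, an isomorphism $\mathrm{Sym}^m g_1\cong\widetilde{\mathrm{Sym}^m g_2}$ (equivalently, since these forms are self-dual up to twist, $\mathrm{Sym}^m g_1\cong\mathrm{Sym}^m g_2$ up to twist) forces $g_1$ and $g_2$ to be twists of each other. This is a known theorem (Ramakrishnan's work on coincidences of symmetric powers; for $m=1,2$ it is classical multiplicity one and his $\mathrm{GL}(2)\times\mathrm{GL}(2)$ results), and you should invoke it rather than re-derive it, since the Zariski-closure argument as you state it only shows the image is large for a \emph{single} $\ell$-adic realisation and still requires an argument to pass from ``image contains $\SL_2\times\SL_2$'' to the inequivalence of the symmetric powers for every $m$. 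Second, the Tauberian step needs the routine but non-optional bookkeeping: the Dirichlet series of $\log L(s,\mathrm{Sym}^m g_1\otimes\mathrm{Sym}^n g_2)$ involves prime powers and the finitely many ramified primes, and one must use the Ramanujan bound (available here by Deligne) to discard those contributions before concluding $\sum_{p\leq x}U_m(B_1(p))U_n(B_2(p))=o(\pi(x))$. Neither issue is fatal, but as written the proposal is a correct roadmap rather than a proof.
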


\subsection{Results for the product of the Fourier coefficients:}
Let $f,g$ be two half-integral weight eigenforms as in Hypothesis~\ref{key1}. As before, we shall assume that $a(t)=b(t)=1$ (cf. Remark~\ref{generaltcase}).
For the notational convenience, we let 
$$\mathbb{P}_{<0}:=\{ p \in \mathbb{P}: p\nmid 2N_1N_2,\ a(tp^2)b(tp^2)<0 \},$$ and 
similarly $\mathbb{P}_{>0}$, $\mathbb{P}_{\leq 0}$, $\mathbb{P}_{\geq 0}$, 
and $\mathbb{P}_{=0}$.  We let
$$\pi_{<0}(x):= \# \{p\leq x: p \in \mathbb{P}_{<0}\},$$ and similarly
$\pi_{>0}(x), \pi_{\leq 0}(x)$, $\pi_{\geq 0}(x)$, and $\pi_{=0}(x)$.

\begin{thm}
\label{main-thm-2}
Let $f,g$ be two distinct  half-integral weight eigenforms as in Hypothesis~\ref{key1}. Assume that $F, G$ are not twists of each other and the pair Sato-Tate conjecture holds for $(F,G)$. 
Then the product of Fourier coefficients $\{a(tp^2)b(tp^2)\}(p \in \mbP)$ change signs infinitely often.  
Moreover, the sets $$\mathbb{P}_{>0}, \mathbb{P}_{<0}, \mathbb{P}_{\geq 0}, \mathbb{P}_{\leq 0}$$ have natural density $1/2$, and $d(\mathbb{P}_{=0}) = 0$.
\end{thm}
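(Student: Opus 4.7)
The plan is to transfer the sign question from the half-integral coefficients $a(tp^2), b(tp^2)$ to the Sato-Tate normalized coefficients $C(p), D(p)$ of the Shimura lifts $F, G$, and then read off the joint distribution via the pair Sato-Tate conjecture.

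First, I would apply Proposition~\ref{keyproposition1} to both $f$ and $g$: there exist sets $E_f, E_g \subseteq \mathbb{P}$ of natural density zero outside of which $\mrm{sign}(a(tp^2)) = \mrm{sign}(C(p))$ and $\mrm{sign}(b(tp^2)) = \mrm{sign}(D(p))$. Therefore, outside the density-zero set $E_f \cup E_g$, one has $\mrm{sign}(a(tp^2)b(tp^2)) = \mrm{sign}(C(p)D(p))$, so it suffices to establish all the stated density statements with $C(p)D(p)$ in place of $a(tp^2)b(tp^2)$.

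Second, I would invoke Conjecture~\ref{Sato-Tate-conj}: the pair $(C(p), D(p))$ equi-distributes on $[-1,1]^2$ with respect to $\mu_{\mrm{ST}} \times \mu_{\mrm{ST}}$. Since $\mu_{\mrm{ST}}$ is symmetric about $0$ and $\mu_{\mrm{ST}}(\{0\})=0$, we have $\mu_{\mrm{ST}}((0,1]) = \mu_{\mrm{ST}}([-1,0)) = 1/2$. Applying Conjecture~\ref{Sato-Tate-conj} to the four open quadrants $(0,1] \times (0,1]$, $[-1,0) \times [-1,0)$, $(0,1] \times [-1,0)$, $[-1,0) \times (0,1]$ separately gives
$$d(\{p : C(p)D(p) > 0\}) = 2 \cdot (1/2)(1/2) = 1/2, \qquad d(\{p : C(p)D(p) < 0\}) = 1/2.$$
Combined with Step 1, this yields $d(\mathbb{P}_{>0}) = d(\mathbb{P}_{<0}) = 1/2$, from which the infinitude of sign changes of $\{a(tp^2)b(tp^2)\}$ is immediate.

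Third, to get $d(\mathbb{P}_{=0}) = 0$: by the Shimura relation~\eqref{keyequation1}, $a(tp^2) = 0$ forces $C(p) = \chi_1(p)/(2\sqrt{p})$, so $|C(p)| < \epsilon$ as soon as $p > 1/(4\epsilon^2)$. The (classical, single-variable) Sato-Tate theorem for $F$ gives that $\{p : |C(p)| < \epsilon\}$ has natural density $\mu_{\mrm{ST}}([-\epsilon,\epsilon])$, which tends to $0$ with $\epsilon$; thus $d(\{p : a(tp^2)=0\}) = 0$, and the same argument handles $b(tp^2)$. Hence $d(\mathbb{P}_{=0}) = 0$, which together with the above gives $d(\mathbb{P}_{\geq 0}) = d(\mathbb{P}_{\leq 0}) = 1/2$. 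The main obstacle is built into the hypothesis: the nontrivial analytic content is the pair Sato-Tate conjecture, which is simply assumed. Granted that, the proof reduces to a measure-theoretic calculation powered by Proposition~\ref{keyproposition1} and the symmetry and non-atomicity of $\mu_{\mrm{ST}}$; the only extra verification, handled in Step 3, is the negligibility of $\mathbb{P}_{=0}$.
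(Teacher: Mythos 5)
Your proposal is correct and follows essentially the same route as the paper: both transfer the sign of $a(tp^2)b(tp^2)$ to that of $C(p)D(p)$ up to a density-zero set of primes (via the $\epsilon$-truncation that underlies Proposition~\ref{keyproposition1}) and then apply the pair Sato--Tate conjecture to the four quadrants of $[-1,1]^2$, each of $\mu_{\mrm{ST}}\times\mu_{\mrm{ST}}$-measure $1/4$. The only difference is organizational: you invoke Proposition~\ref{keyproposition1} as a black box (plus a short check that the sets where $a(tp^2)=0$ or $C(p)=0$ are negligible) and compute exact densities of symmetric-difference-negligible sets, whereas the paper re-runs the truncation inline and sandwiches $\liminf$ and $\limsup$ of $\pi_{<0}(x)/\pi(x)$.
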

\begin{proof}
First, we observe that the Shimura lifts $F,G$ are primitive forms and without complex multiplication (CM). This is because there are no newforms with CM for square-free levels
and $2N_1, 2N_2$ are square-free.   Then, by Shimura correspondence, for any prime $(p,2N_1N_2)=1$,
we have that
$$a(tp^2)  =  A(p)- \chi_1(p) p^{k_1-1},$$ 
where $\chi_1$ is a quadratic character. Similarly, for $g$,
$$b(tp^2)  =  B(p)- \chi_2(p) p^{k_2-1},$$ 
where $\chi_2$ is a quadratic character.

From the above equation, we get that
\begin{equation*}
a(tp^2)>0 \Longleftrightarrow   1 \geq C(p) > \frac{\chi_1(p)}{2 \sqrt{p}}, \quad a(tp^2)<0 \Longleftrightarrow \frac{\chi_1(p)}{2 \sqrt{p}} > C(p)\geq -1. 
\end{equation*}
Similar inequalities also hold for the Fourier coefficients $\{b(tp^2)\}_{p \in \mathbb{P}}$ as well.
First, we shall show that
$$\underset{x \ra \infty}{\mrm{lim\ inf}} \ \frac{\pi_{< 0}(x)}{\pi(x)} \geq \mu_{\mrm{ST}}([0,1])=\frac{1}{2}.$$

For any fixed (but small) $\epsilon >0$, we have the following inclusion of sets
$$ \{p\leq x: p \nmid 2N_1N_2,\  a(tp^2)b(tp^2)<0\} \supseteq S_{\frac{1}{4\epsilon^2}}([\epsilon,1],[-1,-\epsilon])(x) \cup S_{\frac{1}{4\epsilon^2}}([-1,-\epsilon],[\epsilon,1])(x),$$
where $S_a(I_1, I_2)(x):= \{p \in S(I_1,I_2)(x) : p>a \},$ for any $a \in \R^{+}$.
Hence, we have
$$  \pi_{<0}(x) + \pi \left(\frac{1}{4\epsilon^2}\right) \geq \# S([\epsilon,1],[-1,-\epsilon])(x) + \# S([-1,-\epsilon],[\epsilon,1])(x).$$
Now divide the above inequality by $\pi(x)$
$$ \frac{\pi_{<0}(x)}{\pi(x)} + \frac{\pi\left(\frac{1}{4\epsilon^2}\right) }{\pi(x)} 
\geq \frac{\# S([\epsilon,1],[-1-\epsilon])(x) + \# S([-1-\epsilon],[\epsilon,1])(x)}{\pi(x)}.$$
The term $\frac{\pi(\frac{1}{4\epsilon^2})}{\pi(x)}$ tends to zero as $x\ra \infty$ as $\pi(\frac{1}{4\epsilon^2})$ is finite.
By Conjecture~\ref{Sato-Tate-conj}, we have
$$ \frac{\# S([\epsilon,1],[-1,-\epsilon])(x) + \# S([-1,-\epsilon],[\epsilon,1])(x)  }{\pi(x)}  \lra 2. \mu_{\mrm{ST}}([\epsilon, 1])\mu_{\mrm{ST}}([-1,-\epsilon]) \quad\ \mrm{as}\quad 	x \ra \infty. $$ 
This implies that 
\begin{equation}
\label{key-inequality}
\underset{x \ra \infty}{\mrm{lim\ inf}} \ \frac{\pi_{<0}(x)}{\pi(x)} \geq 2. \mu_{\mrm{ST}}([\epsilon, 1])\mu_{\mrm{ST}}([-1,-\epsilon]),
\end{equation}
where $\pi_{<0}(x)=\# \{p\leq x:p \nmid2N_1N_2,\ a(tp^2)b(tp^2)<0\}$ by definition. Since the inequality~\eqref{key-inequality} holds for all $\epsilon>0$, we have that 
$$ \underset{x \ra \infty}{\mrm{lim\  inf}} \ \frac{\pi_{<0}(x)}{\pi(x)} \geq \mu_{\mrm{ST}}([0,1]) = \frac{1}{2}.$$
A similarly proof  shows that
$\underset{x \ra \infty}{\mrm{lim\ inf}} \ \frac{\pi_{\leq 0}(x)}{\pi(x)} \geq \frac{1}{2}.$
Since $\pi_{> 0}(x) = \pi(x) - \pi_{\leq 0}(x)$, we have that
$\underset{x \ra \infty}{\mrm{lim\ sup}} \ \frac{\pi_{> 0}(x)}{\pi(x)} \leq \frac{1}{2}.$
Hence, the limit  $ \underset{x \ra \infty}{\mrm{lim}} \frac{\pi_{<0}(x)}{\pi(x)}$ exists 
and is equal to $\frac{1}{2}$. Therefore, the natural density of the set $\mathbb{P}_{<0}$ is $\frac{1}{2}$. A similar proof works for the sets $\mathbb{P}_{ \leq 0}$, $\mathbb{P}_{>0}$, $\mathbb{P}_{\geq 0}$. As a consequence, we see that $d(\mathbb{P}_{=0})=0$.
\end{proof}
\begin{remark}
\label{generaltcase}
In the general case, i.e., $a(t)b(t) \neq 0$, we have to state the theorem with the Fourier coefficients $\frac{a(tp^2)}{a(t)}$ instead of $a(tp^2)$.
\end{remark}

Now, we  state the equi-distribution result for the  product of Fourier coefficients of two distinct half-integral weight cuspidal eigenforms, 
by assuming pair Sato-Tate conjecture for their corresponding Shimura lifts.

\begin{thm}
Assume the hypothesis of Theorem~\ref{main-thm-2}. For any two sub-intervals $I_1 \subseteq [-1,1], I_2 \subseteq [-1,1]$, we have 
$$ d(S(I_1,I_2)) = \underset{x \ra \infty}{\mathrm{lim}}  \ \frac{\#S(I_1,I_2)(x)}{\pi(x)} = \mu_{\mrm{ST}}(I_1) \mu_{\mrm{ST}}(I_2),$$
where 
\begin{align*}
S(I_1,I_2)     &=  \left\lbrace p \in \mathbb{P}: p\nmid 4N_1N_2, \frac{a(tp^2)}{2p^{k_1-\frac{1}{2}}} \in I_1, \frac{b(tp^2)}{2p^{k_2-\frac{1}{2}}} \in I_2 \right\rbrace \\
S(I_1,I_2)(x) &=  \left\lbrace p\leq x: p\in S(I_1,I_2) \right\rbrace.
\end{align*}
\end{thm}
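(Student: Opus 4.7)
The plan is to deduce the desired joint equi-distribution from the pair Sato-Tate conjecture applied to $(F,G)$. The crucial input is the Shimura-lift identity~\eqref{keyequation1}, which gives, for every prime $p \nmid 2N_1N_2$, the relations
$$\frac{a(tp^2)}{2p^{k_1-\frac{1}{2}}} = C(p) - \frac{\chi_1(p)}{2\sqrt{p}}, \qquad \frac{b(tp^2)}{2p^{k_2-\frac{1}{2}}} = D(p) - \frac{\chi_2(p)}{2\sqrt{p}}.$$
Thus the pair we wish to study differs from the pair $(C(p),D(p))$ covered by Conjecture~\ref{Sato-Tate-conj} by a coordinate-wise error of size $O(p^{-1/2})$, and the task reduces to transferring the equi-distribution of $(C(p), D(p))$ to this uniformly close perturbation.

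To effect the transfer, I would run a sandwich argument in the spirit of Proposition~\ref{keyproposition1}. Fix a small $\epsilon > 0$, and for any subinterval $J \subseteq [-1,1]$ define the shrunk interval $J^{-\epsilon} := \{t \in J: \mathrm{dist}(t, [-1,1] \setminus J) \geq \epsilon\}$ and the enlarged interval $J^{+\epsilon} := [-1,1] \cap \{t : \mathrm{dist}(t, J) \leq \epsilon\}$; each is again a (possibly empty) subinterval of $[-1,1]$. If $p > 1/(4\epsilon^2)$ then $|\chi_i(p)/(2\sqrt{p})| < \epsilon$, so membership of $(C(p), D(p))$ in $I_1^{-\epsilon} \times I_2^{-\epsilon}$ forces the normalized pair to lie in $I_1 \times I_2$, and conversely membership of the normalized pair in $I_1 \times I_2$ forces $(C(p), D(p)) \in I_1^{+\epsilon} \times I_2^{+\epsilon}$. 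Discarding the finitely many primes $p \leq 1/(4\epsilon^2)$ (which contribute nothing to the density), dividing by $\pi(x)$, and invoking Conjecture~\ref{Sato-Tate-conj} yields
$$\mu_{\mrm{ST}}(I_1^{-\epsilon})\mu_{\mrm{ST}}(I_2^{-\epsilon}) \leq \liminf_{x \to \infty} \frac{\#S(I_1,I_2)(x)}{\pi(x)} \leq \limsup_{x \to \infty} \frac{\#S(I_1,I_2)(x)}{\pi(x)} \leq \mu_{\mrm{ST}}(I_1^{+\epsilon})\mu_{\mrm{ST}}(I_2^{+\epsilon}).$$

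Finally, letting $\epsilon \to 0^{+}$, both extreme quantities tend to $\mu_{\mrm{ST}}(I_1)\mu_{\mrm{ST}}(I_2)$, since $\mu_{\mrm{ST}}$ is absolutely continuous on $[-1,1]$ with bounded density $\tfrac{2}{\pi}\sqrt{1-t^2}$ and therefore assigns zero mass to each boundary $\partial I_j$. This forces $\#S(I_1,I_2)(x)/\pi(x)$ to converge to $\mu_{\mrm{ST}}(I_1)\mu_{\mrm{ST}}(I_2)$, as required. The main point in the argument is the sandwich construction itself, which is needed to turn the coordinate-wise $O(p^{-1/2})$ discrepancy into a genuine interval perturbation; the only bookkeeping obstacle is making sure the definitions of $J^{\pm\epsilon}$ behave correctly at the endpoints $\pm 1$ of $[-1,1]$, which is immediate from the construction and the absence of atoms of $\mu_{\mrm{ST}}$.
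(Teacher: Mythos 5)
Your proposal is correct and follows essentially the same route as the paper, which likewise reduces the statement to the pair Sato--Tate conjecture for $(C(p),D(p))$ by absorbing the $O(1/\sqrt{p})$ Shimura-lift discrepancy into an $\epsilon$-perturbation of the intervals, generalizing the proof of Proposition~\ref{keyproposition1}. One small repair: define $J^{-\epsilon}$ using distance to $\mathbb{R}\setminus J$ rather than to $[-1,1]\setminus J$, since $a(tp^2)/(2p^{k_1-\frac{1}{2}})$ can exceed $1$ in absolute value by up to $1/(2\sqrt{p})$ and could otherwise escape $I_1$ past an endpoint; with that change your sandwich inequalities hold verbatim and the limit $\epsilon\to 0^{+}$ is unaffected because $\mu_{\mrm{ST}}$ has no atoms.
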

\begin{proof}
The proof of this theorem is similar to the proof of Theorem~\ref{main-thm-2}. This is because,
the set of primes $p$ for which $\frac{a(tp^2)}{2p^{k_1-\frac{1}{2}}}  \in I_1$ and
$\frac{A(p)}{2p^{k_1-\frac{1}{2}}} \not \in I_1$ 
are of density zero. Similarly, for the cuspidal eigenform $g$ with the interval $I_2$. The proof of these statements is a generalization
of the proof of Proposition~\ref{keyproposition1}. 
\end{proof}


\section{Acknowledgements}
The author would like to thank Prof. B. Ramakrishna and Dr. Soma Purkait for their suggestions on the content of this paper.

\bibliographystyle{amsalpha}

\end{document}